\theoremstyle{plain} 
\newtheorem{thm}{Theorem}%[section]
\newtheorem{cor}[thm]{Corollary}
\newtheorem{conj}{Conjecture}
\newtheorem{lem}[thm]{Lemma}
\theoremstyle{definition}
\theoremstyle{remark}
\newtheorem{ex}[thm]{Example}
\newtheorem*{notation}{Notation}
\newcommand{\N}{\mathbb{N}}
\tikzstyle{vertex}=[draw,circle,fill=blue!20]
\tikzstyle{small vertex}=[draw,circle,fill=blue!20,inner sep = 0pt,minimum size = .2cm]
\tikzstyle{redvertex} = [draw,vertex, fill=red!40]
\tikzstyle{greenvertex} = [draw,vertex, fill=green!40]
\tikzstyle{labels} = [shape=rectangle, fill = blue!20]
\tikzstyle{edge} = [draw,thick,-]
\tikzstyle{rededge} = [draw,line width=5pt,-,red!40]
\tikzstyle{greenedge} = [draw,line width=5pt,-,green!40]
\tikzset{onslide/.code args={<#1>#2}{%
  \only<#1>{\pgfkeysalso{#2}} % \pgfkeysalso doesn't change the path
}}
\begin{document}
\author[1]{Alex J. Chin}
\author[2]{Gary Gordon}
\author[3]{Kellie J. MacPhee}
\author[2]{Charles Vincent}
\affil[1]{North Carolina State University, Raleigh, NC}
\affil[2]{Lafayette College, Easton, PA}
\affil[3]{Dartmouth College, Hanover, NH}
\title{Random subtrees of complete graphs}
\maketitle

\begin{abstract}  We study the asymptotic behavior of four statistics associated with subtrees of complete graphs:  the uniform probability $p_n$ that a random subtree is a spanning tree of $K_n$, the weighted probability $q_n$ (where the probability a subtree is chosen is proportional to the number of edges in the subtree) that a random subtree spans  and the two  expectations associated with these two probabilities.  We find $p_n$ and $q_n$ both approach $e^{-e^{-1}}\approx .692$, while both expectations approach the size of a spanning tree, i.e., a random subtree of $K_n$ has approximately $n-1$ edges.
\end{abstract}

\section{Introduction}  We are interested in the following two questions:
  \begin{center}
\begin{itemize}
\item [Q1.] What is the asymptotic probability that a random subtree of $K_n$ is a spanning tree?
\item [Q2.] How many edges (asymptotically) does a random subtree of $K_n$ have?
\end{itemize}

\end{center}

In answering both questions, we consider two different probability measures:  a uniform random probability $p_n$, where each subtree has an equal probability of being selected, and a weighted probability $q_n$, where the probability a subtree is selected is proportional to its size (measured by the number of edges in the subtree).  

As expected, weighting subtrees by their size increases the chances of selecting a spanning tree, i.e., $p_n<q_n$.  Table~\ref{globalprobdata} gives data for these values when $ n \leq  100$.
\begin{table}[htdp]
\begin{center}
\begin{tabular}{c|ll}
$n$ & $p_n$ & $q_n$ \\ \hline
 10 & 0.617473 & 0.652736 \\
 20 & 0.657876 & 0.672725 \\
 30 & 0.669904 & 0.679294 \\
 40 & 0.675689 & 0.682552 \\
 50 & 0.67909 & 0.684497 \\
 60 & 0.681329 & 0.685789 \\
 70 & 0.682915 & 0.686711 \\
 80 & 0.684097 & 0.687401 \\
 90 & 0.685012 & 0.687936 \\
 100 & 0.685741 & 0.688365 \\
\end{tabular}
\caption{Probabilities of selecting a spanning tree using uniform and weighted probabilities.}
\label{globalprobdata}
\end{center}
\end{table}

The (somewhat) surprising result is that $p_n$ and $q_n$ approach the same limit as $n \to \infty$.  This is our first main result, completely answering Q1.

\begin{thm}\label{T:main1} 
\begin{enumerate}
\item  Let $p_n$ be the probability of choosing a spanning tree among all subtrees of $K_n$ with uniform probability, i.e., the probability any subtree is selected is the same. Then $$\lim_{n \to \infty} p_n=e^{-e^{-1}}=0.692201\dots$$
\item  Let $q_n$ be the probability of choosing a spanning tree among all subtrees of $K_n$ with weighted probability, i.e., the probability any subtree is selected is proportional to its number of edges. Then $$\lim_{n \to \infty} q_n=e^{-e^{-1}}=0.692201\dots$$
\end{enumerate}
\end{thm}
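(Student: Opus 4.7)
The plan is to write both $p_n$ and $q_n$ as explicit rational expressions via Cayley's formula and then extract the limit by a dominated-convergence argument in the variable $j = n-k$. By Cayley's theorem, the number of labelled trees on any fixed $k$-element vertex set is $k^{k-2}$, so the number of subtrees of $K_n$ with exactly $k$ vertices is
$$a_k := \binom{n}{k}k^{k-2},$$
and in particular the number of spanning trees is $a_n = n^{n-2}$. Because a tree on $k$ vertices has $k-1$ edges, I would rewrite
$$p_n = \frac{a_n}{\sum_{k=1}^n a_k}, \qquad q_n = \frac{(n-1)\,a_n}{\sum_{k=1}^n (k-1)\,a_k}.$$

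Next I would reindex by $j = n-k$ and analyze the ratio $a_{n-j}/a_n = \binom{n}{j}(n-j)^{n-j-2}/n^{n-2}$. For each fixed $j$, $\binom{n}{j} = (n^j/j!)(1+O(1/n))$ and $(n-j)^{n-j-2} = n^{n-j-2}(1-j/n)^{n-j-2}$, and the second factor tends to $e^{-j}$, giving the pointwise convergence
$$\frac{a_{n-j}}{a_n} \longrightarrow \frac{e^{-j}}{j!} \qquad (n\to\infty).$$
Formal summation gives $\sum_{j\ge 0} e^{-j}/j! = \exp(e^{-1}) = e^{1/e}$, hence $p_n \to e^{-e^{-1}}$. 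For $q_n$ the same reindexing introduces an extra weight $(n-j-1)/(n-1)$ on each $j$-term; this is bounded by $1$ and tends to $1$ for every fixed $j$, so the weighted denominator is asymptotic to $(n-1)\,e^{1/e}\,n^{n-2}$ and the same limit $e^{-e^{-1}}$ emerges.

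The main obstacle I anticipate is legitimizing the interchange of limit and sum, which requires an $n$-independent summable majorant of $a_{n-j}/a_n$. Using $\binom{n}{j}\le n^j/j!$ together with $(1-x)^m \le e^{-mx}$ applied to $x=j/n$ and $m=n-j-2$ yields
$$\frac{a_{n-j}}{a_n} \le \frac{1}{j!}\exp\!\Bigl(-j + \frac{j^2+2j}{n}\Bigr),$$
which on the range $j \le n/4$ simplifies to a clean bound of the form $C\,e^{-3j/4}/j!$, summable uniformly in $n$. For $j$ in the complementary range $n/4 < j < n$ one needs a separate crude estimate (for instance $a_{n-j} \le 2^{n}(n-j)^{n-j-2}$ combined with Stirling) to show that the entire tail contributes $o(a_n)$ thanks to the $n^{-n/4}$-type decay that dominates any exponential factor; I expect this second range to require the most careful bookkeeping. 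Once the interchange is justified for the uniform sum, the bounded extra factor $(n-j-1)/(n-1)$ controls the weighted sum without new input, and both parts of Theorem~\ref{T:main1} follow in parallel.
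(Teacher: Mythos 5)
Your proposal is correct, and the skeleton is the same as the paper's --- both proofs come down to showing $\sum_{j\ge 0} a_{n-j}/a_n \to \sum_{j\ge 0} e^{-j}/j! = e^{e^{-1}}$ --- but the way you justify the interchange of limit and sum, and the way you handle $q_n$, are genuinely different. The paper never takes a termwise limit: it bounds the ratio of \emph{consecutive} terms $a_i/a_{i-1} = (n-i)\left(\tfrac{i+1}{i}\right)^{i-2}$, squeezes the factor $\left(\tfrac{i+1}{i}\right)^{i-2}$ between $e-\varepsilon$ and $e$, telescopes inductively to bound each $a_i$ against $a_{n-1}$, and controls the resulting partial sums of $\sum 1/(i!\,e^i)$ with a Taylor-remainder estimate; the small-$i$ terms (your large-$j$ tail) are absorbed by the crude bound $\left(\tfrac{i+1}{i}\right)^{i-2}\ge 1$, giving a $1/(n-i)!$ decay rather than your Stirling-type estimate. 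Your dominated-convergence packaging --- pointwise limit $a_{n-j}/a_n\to e^{-j}/j!$ plus the explicit majorant $e^{1/2}e^{-3j/4}/j!$ on $j\le n/4$ and a separate $n^{-n/4}$-type tail bound --- is a more standard and arguably cleaner analytic argument, at the cost of the extra bookkeeping you anticipate in the range $n/4<j<n$ (note also that your majorant inequality $(1-x)^m\le e^{-mx}$ needs $m\ge0$, which fails only for $j=n-1$, safely inside the tail range). The more substantive divergence is part (2): the paper deduces $q_n\to e^{-e^{-1}}$ from the identity $q_n = p_n/\mu_p(n)$ together with the density result $\mu_p(n)\to 1$ (Theorem~\ref{T:main2}(1), which in turn needs both Lemmas~\ref{lemmatop} and~\ref{lemmabottom}), whereas you handle the weighted sum directly by inserting the bounded weight $(n-j-1)/(n-1)\to 1$ into the same dominated-convergence argument; your route makes Theorem~\ref{T:main1} self-contained, while the paper's buys the clean structural identity of Corollary~\ref{C:pnqn}. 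One cosmetic discrepancy: you index by vertices and include single-vertex subtrees ($k=1$), adding $n$ to the denominator of $p_n$; this is asymptotically irrelevant, as the paper itself notes for $a_0=n$.
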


For the second question Q2, the expected number of edges of a random subtree of $K_n$ is $\sum pr(T) |E(T)|$, where $pr(T)$ is the probability a tree $T$ is selected, $E(T)$ is the edge set of $T$, and the sum is over all subtrees $T$ of $K_n$.  Since we have two distinct probability functions $p_n$ and $q_n$, we obtain two distinct expected values.

The relation between these two expected values is equivalent to a famous example from elementary probability:  

\begin{quote}
All universities  report ``average class size.''  However, this average depends on whether you first choose a class at random, or first select a student at random, and then ask that student to randomly select one of their classes.
\end{quote}

Our uniform expectation is exactly analogous to the first situation, and our weighted expectation is equivalent to the student weighted average.  In this context, edges play the role of students and the subtrees are the classes.  It is a straightforward exercise to show the student weighting always produces a larger expectation.  This was first noticed by  Feld and Grofman in 1977 in \cite{fg}.  For our purposes, this result will show the weighted expectation is always greater than the uniform expectation.

Since both of these expected values are  obviously bounded above by $n-1$, the size of a spanning tree, we  use a variation of {\it subtree density} first defined in \cite{jam1}.  We  divide our expected values by $n-1$, the number of edges in a spanning tree, to convert our expectations to densities.  Letting $a_k$ equal the number of $k$-edge subtrees in $K_n$, this gives us two formulas for subtree density, one  using uniform probability and one  using weighted probability:
$$\mbox{Uniform density: } \mu_p(n)=\frac{\sum_{k=1}^{n-1}ka_k}{(n-1)\sum_{k=1}^{n-1}a_k} \hskip.5in  \mbox{Weighted density: }  \mu_q(n)=\frac{\sum_{k=1}^{n-1}k^2a_k}{(n-1)\sum_{k=1}^{n-1}ka_k}$$

Table~\ref{globaldensities} gives data for these two densities when $n \leq 100$.

\begin{table}[htdp]
\begin{center}
\begin{tabular}{c|ll}
$n$ & $\mu_p(n)$ & $\mu_q(n)$ \\ \hline
 10 & 0.945976 & 0.952436 \\
 20 & 0.977928 & 0.97912 \\
 30 & 0.986177 & 0.986661 \\
 40 & 0.989945 & 0.990205 \\
 50 & 0.9921 & 0.992263 \\
 60 & 0.993496 & 0.993607 \\
 70 & 0.994472 & 0.994553 \\
 80 & 0.995194 & 0.995255 \\
 90 & 0.995749 & 0.995797 \\
 100 & 0.996189 & 0.996228 \\
\end{tabular}
\caption{Subtree densities using uniform and weighted probabilities.}
\label{globaldensities}
\end{center}
\end{table}%

Evidently, both of these densities approach 1.  This is our second main result, and our answer to Q2.

\begin{thm}\label{T:main2}
\begin{enumerate}
\item $\displaystyle{\lim_{n\to\infty}\mu_p(n) = 1,}$
\item $\displaystyle{\lim_{n\to\infty} \mu_q(n) = 1.}$
\end{enumerate}
\end{thm}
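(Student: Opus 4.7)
The plan is to establish part (1) directly and then derive part (2) from it by a short comparison. For part (1), I would rewrite
$$\mu_p(n)=\frac{\E[K]}{n-1}=1-\frac{\E[n-1-K]}{n-1},$$
where $K$ denotes the number of edges of a uniformly random subtree of $K_n$, so it suffices to show the expected ``defect from spanning'' $\E[n-1-K]$ stays bounded as $n\to\infty$. By Cayley's formula $a_k=\binom{n}{k+1}(k+1)^{k-1}$ and $a_{n-1}=n^{n-2}$, and Theorem~\ref{T:main1}(1) already supplies the denominator asymptotic $\sum_k a_k/n^{n-2}\to e^{1/e}$. Substituting $j=n-1-k$ and normalizing by $n^{n-2}$ reduces the task to proving
$$\sum_{j=1}^{n-2} j\,b_{n,j}=O(1),\qquad b_{n,j}:=\frac{\binom{n}{j}(n-j)^{n-j-2}}{n^{n-2}}.$$

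The core estimate is the clean upper bound
$$b_{n,j}\le \frac{1}{j!}\Bigl(1-\tfrac{j}{n}\Bigr)^{n-j-2},$$
which follows from $\binom{n}{j}\le n^j/j!$ together with $(n-j)^{n-j-2}/n^{n-2}=(1-j/n)^{n-j-2}/n^j$. For each fixed $j$ the right-hand side tends to $e^{-j}/j!$, reflecting the same Poisson-type limit that produces the constant $e^{1/e}$ in Theorem~\ref{T:main1}. To turn this pointwise convergence into a uniform bound on the whole sum I would split the index range at $j=\lceil n/2\rceil$. On $1\le j\le n/2$, the inequality $\ln(1-j/n)\le -j/n$ together with $(n-j-2)/n\ge 1/4$ (valid once $n$ is large) gives $(1-j/n)^{n-j-2}\le e^{-j/4}$, so this piece is dominated by $\sum_{j\ge 1}j\,e^{-j/4}/j!$, a constant independent of $n$. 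On $n/2<j\le n-2$, the trivial bound $(1-j/n)^{n-j-2}\le 1$ reduces the tail to $\sum_{j>n/2}1/(j-1)!$, which in fact tends to $0$. Combining the two ranges yields $\E[n-1-K]=O(1)$, hence $\mu_p(n)=1-O(1/n)\to 1$.

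Part (2) is then essentially immediate: the Feld--Grofman observation recalled in the introduction says size-biasing never decreases the expectation, so $\mu_q(n)\ge\mu_p(n)$; meanwhile $\mu_q(n)\le 1$ trivially because every subtree has at most $n-1$ edges. Squeezing with part (1) finishes the argument.

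The step I expect to require the most care is the uniform bound on $\sum_j j\,b_{n,j}$: the pointwise asymptotic $b_{n,j}\to e^{-j}/j!$ only captures the $j=O(1)$ regime, and one must verify that the middle range $j=\Theta(n)$ contributes negligibly, where neither the exponential envelope nor the trivial tail estimate is individually tight. The split-at-$n/2$ device sidesteps this cleanly, and pinning down the constants on each side is the only genuine calculation in the proof.
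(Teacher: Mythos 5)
Your argument is correct, and part (1) takes a genuinely different route from the paper. The paper proves $\mu_p(n)\to 1$ by sandwiching the two sums separately: a lower bound $B>(n-1)n^{n-2}\bigl(\tfrac{n-3}{n-1}\bigr)e^{e^{-1}}$ and an upper bound $A<n^{n-2}\bigl(e^{(e-\varepsilon)^{-1}}+\tfrac{e}{r(\varepsilon)!}\bigr)$, each obtained by estimating consecutive ratios $b_i/b_{i-1}$ and $a_i/a_{i-1}$ and inducting on $n-i$, with the $\varepsilon$, $k(\varepsilon)$, $r(\varepsilon)$ bookkeeping needed because $\bigl(\tfrac{i+1}{i}\bigr)^{i-2}$ only approaches $e$ from below. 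You instead bound the expected spanning defect $\E[n-1-K]$ directly: your inequality $b_{n,j}\le \frac{1}{j!}\bigl(1-\tfrac{j}{n}\bigr)^{n-j-2}$ is clean and correct (and only requires the trivial normalization $A\ge a_{n-1}=n^{n-2}$, so your appeal to Theorem~\ref{T:main1}(1) is dispensable and no circularity arises), and the split at $j=n/2$ handles the middle range properly. What your approach buys is a self-contained proof with an explicit $O(1/n)$ rate and no $\varepsilon$-dependent constants; what the paper's approach buys is that its two lemmas do double duty, since the same upper and lower bounds on $A/a_{n-1}$ are reused to prove Theorem~\ref{T:main1}. Your part (2) is identical to the paper's: both squeeze $\mu_p(n)\le\mu_q(n)\le 1$ using the Feld--Grofman size-biasing inequality.
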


The fact that the probabilities and the densities do not depend on which probability measure we use is an indication of the dominance of the number of spanning trees in $K_n$ compared to the the number of non-spanning trees.  Theorems~\ref{T:main1} and \ref{T:main2} are proven in Section~\ref{S:global}.  The proofs follow from a rather detailed analysis of the growth rate of individual terms in the sums that are used to compute all of the statistics.  But we emphasize that these proof techniques are completely elementary.  

Subtree densities have been studied before, but apparently only when the graph is itself a tree.  Jamison introduced this concept in \cite{jam1} and studied its properties in \cite{jam2}.  A more recent paper of Vince and Wang \cite{vw} characterizes extremal families of trees with the largest and smallest subtree densities, answering one of Jamison's questions.  A recent survey of results connecting subtrees of trees with other invariants, including the Weiner index, appears in \cite{sw}.

There are several interesting directions for future research in this area.  We indicate some possible projects in Section~\ref{S:future}.

%We thank several people for useful conversations and communications; Maria Chudnovsky, Evan Fisher, Jeremy Martin,  Robin Pemantle, Derek Smith,  Andrew Vince and Hua Wang.

\section{Global probabilities}\label{S:global}
Our goal in this section is to provide proofs of Theorems~\ref{T:main1} and \ref{T:main2}.    As usual, $K_n$ represents the complete graph on $n$ vertices.  We fix notation for the subtree enumeration we will need.  
\begin{notation}  Assume $n$ is fixed.    We define $a_k, b_k, A$ and $B$ as follows.

\begin{itemize}
\item Let $a_k$ denote the number of $k$-edge subtrees in $K_n$.  (We ignore subtrees of size 0, although setting $a_0=n$ will not change the asymptotic behavior of any of our statistics.)
\item Let $\displaystyle{A=\sum_{k=1}^{n-1}a_k}$ be the total number of subtrees of all sizes in $K_n$.
\item Let $b_k=ka_k$ denote the number of edges used by all of the $k$-edge subtrees in $K_n$.
\item Let $\displaystyle{B=\sum_{k=1}^{n-1}b_k}$ be the sum of the sizes (number of edges) of all the subtrees of $K_n$.
\end{itemize}
\end{notation}

It is immediate from Cayley's formula that  $\displaystyle{ a_k= \binom{n}{k+1} (k+1)^{k-1}.}$  We can view $B$ as the sum of all the entries in a 0--1 edge--tree incidence matrix. 
The four statistics we study here, $p_n, q_n, \mu_p(n)$ and $\mu_q(n)$, can be computed using $A, B, a_k$ and $b_k$.  We omit the straightforward proof of the next result.

\begin{lem}\label{L:global} Let $p_n, q_n, \mu_p(n)$ and $\mu_q(n)$ be as given above.  Then
\begin{enumerate}
\item $\displaystyle{p_n=\frac{a_{n-1}}{A}}$,
\item $\displaystyle{q_n=\frac{b_{n-1}}{B}}$,
\item $\displaystyle{\mu_p(n)=\frac{B}{(n-1)A}}$,
\item $\displaystyle{\mu_q(n)=\frac{\sum_{k=1}^{n-1}kb_k}{(n-1)B}=\frac{\sum_{k=1}^{n-1}k^2a_k}{(n-1)B}}$.
\end{enumerate}
\end{lem}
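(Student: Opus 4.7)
The plan is to verify each of the four identities directly from the definitions of the weighting schemes together with the bookkeeping quantities $a_k, b_k, A, B$. Since the paper itself notes that the proof is ``straightforward,'' I do not expect any obstacle beyond careful accounting; the main thing to be careful about is keeping the two probability measures (uniform on subtrees vs.\ weighted by edge count) distinct.

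First I would handle (1) and (2) together. Under the uniform measure every subtree has probability $1/A$, and the spanning trees are precisely the $(n-1)$-edge subtrees, of which there are $a_{n-1}$; summing gives $p_n=a_{n-1}/A$. For the weighted measure, the probability of a specific subtree $T$ is $|E(T)|/\sum_{T'}|E(T')| = |E(T)|/B$, so summing over spanning trees produces $(n-1)a_{n-1}/B=b_{n-1}/B$, establishing (2).

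Next I would turn to the density formulas. For (3), the uniform expectation of $|E(T)|$ is
\[
\frac{1}{A}\sum_{T}|E(T)| \;=\; \frac{1}{A}\sum_{k=1}^{n-1} k a_k \;=\; \frac{B}{A},
\]
and dividing by the spanning-tree size $n-1$ converts this expectation to the density $\mu_p(n)=B/((n-1)A)$. For (4), under the weighted measure the expectation of $|E(T)|$ is
\[
\sum_{T} |E(T)| \cdot \frac{|E(T)|}{B} \;=\; \frac{1}{B}\sum_{k=1}^{n-1} k^2 a_k,
\]
and since $k^2 a_k = k \cdot b_k$ this equals $\frac{1}{B}\sum_{k=1}^{n-1} k b_k$; dividing by $n-1$ yields both expressions in (4).

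There is no genuinely hard step here; if anything requires care, it is the conceptual point (already discussed earlier in the excerpt via the class-size analogy) that the weighted measure places mass $|E(T)|/B$ on each subtree $T$ rather than $1/A$, so that the expectation under that measure naturally picks up an extra factor of $k$ and produces the second-moment-type sum $\sum k^2 a_k$. Once this is kept straight, all four identities drop out in a line or two, which is why the authors omit the proof.
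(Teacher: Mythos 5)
Your verification is correct, and it is exactly the routine computation the paper has in mind when it says the proof is omitted as straightforward: all four identities follow directly from the definitions of the two measures and the bookkeeping identities $b_k = k a_k$, $B=\sum_k k a_k$. Nothing further is needed.
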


\begin{ex}  We compute each of these statistics for the graph $K_4$.  In this case, there are 6 subtrees of size one (the 6 edges of $K_4$), 12 subtrees of size two and 16 spanning trees (of size three).  Then we find 
 $\displaystyle{p_4=\frac{16}{34}=.471\dots}$,
$\displaystyle{q_4=\frac{48}{78}=.615\dots}$,
 $\displaystyle{\mu_p(4)=\frac{78}{102}=.768\dots}$,
and $\displaystyle{\mu_q(4)=\frac{198}{234}=.846\dots}$.

\end{ex}

The remainder of this section is devoted to proofs of Theorems~\ref{T:main1} and \ref{T:main2}.  We first prove part (1) of Theorem~\ref{T:main2}, then use the bounds from Lemmas~\ref{lemmatop} and \ref{lemmabottom} to help prove both parts of Theorem~\ref{T:main1}.  Lastly, we  prove part (2) of Theorem~\ref{T:main2}.  

Thus, our immediate goal is to prove that the uniform density $\displaystyle{\mu_p(n)=\frac{B}{(n-1)A}}$ approaches 1 as $n \to \infty$.  Our approach is as follows: We bound the numerator $B$ from below and the term $A$ in the denominator from above so that $\mu_p(n)$ is bounded below by a function that approaches 1 as $n \to \infty$.  Lemma~\ref{lemmatop} establishes the lower bound for $B$ and Lemma~\ref{lemmabottom} establishes the upper bound for $A$, from which the result follows.

%LEMMA TOP
\begin{lem}
\label{lemmatop}  Let  $\displaystyle{B=\sum_{k=1}^{n-1}k\binom{n}{k+1} (k+1)^{k-1}}$, as above.  Then 
\begin{equation}
B > (n-1)n^{n-2} \left( \frac{n-3}{n-1}\right) e^{e^{-1}}.
\label{topresult}
\end{equation}

\end{lem}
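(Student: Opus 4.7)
The strategy is to divide by $n^{n-2}$ so that the spanning-tree term $b_{n-1}/n^{n-2} = n-1$ becomes explicitly leading, then compare the remaining terms to the Taylor expansion $e^{e^{-1}} = \sum_{j \ge 0}(e^{-1})^j/j!$ term by term. Reindexing via $j = n-1-k$ puts the spanning trees at $j=0$ and gives
\[
\frac{B}{n^{n-2}} \;=\; \sum_{j=0}^{n-2} \frac{n-1-j}{j!}\, P_j,
\qquad P_j := \prod_{i=0}^{j-1}\left(1 - \tfrac{i}{n}\right)\left(1 - \tfrac{j}{n}\right)^{n-j-2}.
\]

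The key lemma will be the pointwise bound $P_j \ge e^{-j}$ for $0 \le j \le n-2$. I would prove this by taking logarithms: $\ln P_j = \sum_{i=0}^{j-1}\ln(1-i/n) + (n-j-2)\ln(1-j/n)$. Since $f(x) = \ln(1-x/n)$ is decreasing on $[0,n)$, the left Riemann sum dominates the integral, so $\sum_{i=0}^{j-1}\ln(1-i/n) \ge \int_0^j \ln(1-x/n)\,dx$, and an elementary antiderivative evaluates the integral to $-(n-j)\ln(1-j/n) - j$. Adding the remaining $(n-j-2)\ln(1-j/n)$ collapses the $\ln(1-j/n)$ contributions, yielding $\ln P_j \ge -2\ln(1-j/n) - j \ge -j$ since $\ln(1-j/n) \le 0$.

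With this bound in hand, I would write $B/n^{n-2} \ge \sum_{j=0}^{n-2}(n-1-j)/(j!\,e^j)$ and remove the upper truncation by observing that the omitted tail terms (for $j \ge n-1$) are all nonpositive because $n-1-j \le 0$. Hence the partial sum exceeds the full series
\[
\sum_{j=0}^{\infty}\frac{n-1-j}{j!\,e^j} \;=\; (n-1)\,e^{e^{-1}} \,-\, e^{-1}\,e^{e^{-1}} \;=\; (n-1-e^{-1})\,e^{e^{-1}}.
\]
The trivial comparison $n-1-e^{-1} > n-3$ (equivalent to $e^{-1} < 2$) then upgrades this to the stated inequality $B > (n-3)\,n^{n-2}\,e^{e^{-1}} = (n-1)n^{n-2}\cdot\tfrac{n-3}{n-1}\cdot e^{e^{-1}}$.

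The main obstacle will be the pointwise lemma $P_j \ge e^{-j}$: one has to get the direction of the Riemann-sum--integral comparison correct (using monotonicity of $f$, not any convexity property) and then recognize the fortunate cancellation that leaves behind a favorably-signed residual $-2\ln(1-j/n)$. Once that lemma is secured, everything else reduces to recognizing a familiar exponential series, discarding a provably nonpositive tail, and verifying a one-line numerical comparison.
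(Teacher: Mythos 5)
Your proof is correct, and after the reindexing $j=n-1-k$ your key bound $P_j \ge e^{-j}$ is exactly equivalent to the paper's intermediate inequality $b_i \geq \frac{i}{(n-i-1)!\,(n-1)\,e^{n-i-1}}\, b_{n-1}$ — but you both derive it and exploit it by different means. The paper obtains the per-term bound by telescoping the consecutive ratio $\frac{b_i}{b_{i-1}} = (n-i)\cdot\frac{i}{i-1}\cdot\bigl(\frac{i+1}{i}\bigr)^{i-2}$ against the elementary estimate $\bigl(\frac{i+1}{i}\bigr)^{i-2} < e$; you instead take logarithms of the explicit product $P_j$ and compare the left Riemann sum of the decreasing function $\ln(1-x/n)$ with its integral, which is valid (the direction of the comparison and the antiderivative are both right, and the residual $-2\ln(1-j/n)$ is indeed favorably signed) but costs an integration to buy what a one-line ratio estimate gives for free. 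Where your route genuinely pays off is the summation step: the paper reorganizes $\sum_j (n-1-j)/(j!\,e^j)$ into partial sums $h(k)$ of the series for $e^{e^{-1}}$ and controls each truncation with a Lagrange remainder bounded by $\frac{2}{(k+1)!\,e^{k+1}}$, which is where the constant degrades from $n-1$ to $n-3$; you instead evaluate the full series in closed form as $(n-1-e^{-1})\,e^{e^{-1}}$ and discard the provably nonpositive tail $j \ge n-1$, which is both shorter and sharper, since $n-1-e^{-1} > n-3$ with room to spare. So your argument proves a slightly stronger inequality than the one stated, and the stated one follows from the trivial comparison $e^{-1} < 2$.
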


%We note that we use $n-3$ for simplicity.  Any $n-k$ such that $k \geq 1+e^{\frac{1}{e}} \approx 2.4447$ suffices.

\begin{proof}
Recall  $b_{n-1} = (n-1)n^{n-2}$ counts the total number of edges used in all the spanning trees of $K_n$.  We examine the ratio $b_{i}/b_{i-1}$ in order to establish a lower bound for each $b_i$ in terms of $b_{n-1}$.  
\begin{equation}
\frac{b_i}{b_{i-1}} = \frac{\binom{n}{i+1} (i+1)^{i-1} i}{\binom{n}{i} i^{i-2} (i-1)} = \frac{n-i}{i+1}\cdot \frac{i}{i-1}\cdot \frac{(i+1)^{i-1}}{i^{i-2}} = (n-i)\cdot\frac{i}{i-1} \cdot\left(\frac{i+1}{i}\right)^{i-2}
\label{topratio}
\end{equation}
We use the fact that $e$ is the least upper bound for the sequence $\left\{\left(\frac{i+1}{i}\right)^{i-2}\right\}$ to rewrite \eqref{topratio} as the inequality
$$
b_{i-1} \geq \frac{i-1}{i(n-i)e} \cdot b_i,
$$
and this is valid for  $i = 2, 3,\dots,n-1$.  In general, for $i<n$, an inductive argument on $n-i$  establishes the following:
\begin{equation}
b_{i} \geq \frac{i}{(n-i-1)!(n-1)e^{n-i-1}} \cdot b_{n-1}.
\label{topinequality}
\end{equation}
%
%When $i = n-1$, we have
%\begin{equation}
%t_{n-2} \geq \frac{n-2}{(n-1)e} \cdot t_{n-1}.
%\label{topn-1}
%\end{equation}
%When $i = n-2$,
%\begin{equation}
%t_{n-3} \geq \frac{n-3}{2(n-2)e} \cdot t_{n-2} \geq \frac{n-3}{2(n-1)e^2} \cdot t_{n-1},
%\end{equation}
%where we have used \eqref{topn-1} to substitute in for $t_{n-1}$.  Continuing in this manner, we obtain
%\begin{equation}
%t_{n-4} \geq \frac{n-4}{6(n-1)e^3} \cdot t_{n-1}
%\end{equation}
%for $i = n-4$, and, in general,

%
Then equation \eqref{topinequality} gives
\begin{equation}
B = \sum_{i=1}^{n-1} b_i \geq \frac{b_{n-1}}{n-1} \left((n-1) + \frac{n-2}{e} + \frac{n-3}{2e^2} + \frac{n-4}{6e^3} + \dots + \frac{1}{(n-2)!e^{n-2}}\right).
\end{equation}
We bound this sum below using standard techniques from calculus.  Let  
$$h(k) = 1 + \frac{1}{e} + \frac{1}{2e^2} + \frac{1}{6e^3} + \dots + \frac{1}{k!e^{k}}.$$

Then
\begin{equation}
B \geq \frac{b_{n-1}}{n-1}(h(n-2) + h(n-3) + \dots + h(0)).
\label{deriv}
\end{equation}
Now  $\displaystyle{e^{x} = \sum_{i=0}^{k}\frac{x^i}{i!} + R_k(x)}$, where $\displaystyle{R_k(x) = \frac{e^y}{(k+1)!} x^{k+1}}$
for some  $y \in (0,x)$.  We are interested in this expression when $x=e^{-1}$.  Then $R_k(x)$ is maximized when $x=y= e^{-1}$.  So, using $e^{(e^{-1})} \approx 1.44 \ldots<2$, we have
$$
R_k\left(e^{-1}\right) \leq  \frac{e^{(e^{-1})}}{{(k+1)!} }(e^{-1})^{k+1} \leq \frac{2}{(k+1)!e^{k+1}}.
$$
Now, using this upper bound on the error in the Maclaurin polynomial for $e^x$ at  $x = e^{-1}$ gives
$$
e^{e^{-1}} = h(k) + R_k\left(e^{-1}\right) \leq h(k) + \frac{2}{(k+1)!e^{k+1}},
$$
so 
$$
h(k) \geq e^{e^{-1}} - \frac{2}{(k+1)!e^{k+1}}.
$$
Substituting into \eqref{deriv},
\begin{align*}
B &\geq \frac{b_{n-1}}{n-1}\sum_{k=0}^{n-2} h(k)
\geq \frac{b_{n-1}}{n-1} \left((n-1)e^{e^{-1}} - 2\sum_{k=0}^{n-2}\frac{1}{(k+1)!e^{k+1}}\right) \\
&> \frac{b_{n-1}}{n-1} \left((n-1)e^{e^{-1}} - 2\sum_{i=0}^\infty\frac{1}{i!e^i}\right)
= \frac{b_{n-1}}{n-1} \left((n-1)e^{e^{-1}} - 2e^{e^{-1}}\right) = b_{n-1} \left( \frac{n-3}{n-1}\right)e^{e^{-1}}.
\end{align*}

\end{proof}
%
%
%
%LEMMA BOTTOM

We now give an upper bound for $A$, the total number of subtrees of $K_n$.
\begin{lem}
\label{lemmabottom}
Let $\displaystyle{A=\sum_{k=1}^{n-1}a_k}$ be the total number of subtrees of all sizes in $K_n$, as above.   Then, for every $\varepsilon >0$, there is a positive integer $r(\varepsilon) \in \N$ so that, for all $n>r(\varepsilon)$, 
\begin{equation}
A <n^{n-2} \left(e^{(e-\varepsilon)^{-1}} + \frac{e}{r(\varepsilon)!}\right)
\label{bottomresult}
\end{equation}
\end{lem}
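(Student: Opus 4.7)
The plan is to mirror the proof of Lemma~\ref{lemmatop}, reversing the inequalities. Using the consecutive ratio
$$\frac{a_i}{a_{i-1}}\;=\;\frac{\binom{n}{i+1}(i+1)^{i-1}}{\binom{n}{i}\,i^{i-2}}\;=\;(n-i)\Bigl(1+\tfrac{1}{i}\Bigr)^{i-2},$$
I would anchor every $a_k$ to $a_{n-1}=n^{n-2}$ (Cayley). The sequence $c_i:=(1+1/i)^{i-2}$ is strictly increasing with limit $e$, so given $\varepsilon>0$ I can choose $r=r(\varepsilon)$ such that $c_i>e-\varepsilon$ for all $i\geq r$.

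Then for each $i\geq r$ we have $a_i/a_{i-1}>(n-i)(e-\varepsilon)$, equivalently $a_{i-1}<a_i/((n-i)(e-\varepsilon))$. Iterating downward from $a_{n-1}$ gives, for every $k$ with $k+1\geq r$,
$$a_k\;<\;\frac{n^{n-2}}{(n-k-1)!\,(e-\varepsilon)^{n-k-1}}.$$
Substituting $m=n-1-k$ and summing the ``main'' part,
$$\sum_{k=r-1}^{n-1}a_k\;<\;n^{n-2}\sum_{m=0}^{n-r}\frac{1}{m!\,(e-\varepsilon)^m}\;<\;n^{n-2}\,e^{1/(e-\varepsilon)}.$$

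For the remaining ``tail'' $k\in\{1,\dots,r-2\}$ (a fixed number of terms once $r$ is chosen), I would use the crude bound $a_k\leq n^{k+1}(k+1)^{k-1}/(k+1)!$: this is a polynomial in $n$ of degree at most $r-1$, so $\sum_{k=1}^{r-2}a_k=o(n^{n-2})$ as $n\to\infty$ for any fixed $r$. Choosing $r(\varepsilon)$ large enough to force $\sum_{k=1}^{r-2}a_k<(e/r!)\,n^{n-2}$ for every relevant $n$ then combines with the main estimate to give \eqref{bottomresult}.

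The main obstacle is the joint calibration of $r(\varepsilon)$: it must be chosen large enough both for (i) $c_i>e-\varepsilon$ whenever $i\geq r$ and (ii) the small-$k$ polynomial tail to fit inside the tiny budget $(e/r!)\,n^{n-2}$ uniformly in $n$. Both are quantitative versions of limits already in hand---the monotone convergence $c_i\to e$ for the first, and the super-exponential dominance of $n^{n-2}$ over any fixed polynomial in $n$ for the second---so this step ultimately reduces to an explicit, if slightly tedious, bookkeeping exercise rather than a new idea.
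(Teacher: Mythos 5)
Your argument follows the paper's proof in all essentials: compute the consecutive ratio $a_i/a_{i-1}=(n-i)\bigl(1+\tfrac1i\bigr)^{i-2}$, anchor everything at $a_{n-1}=n^{n-2}$, use $(1+\tfrac1i)^{i-2}>e-\varepsilon$ beyond a fixed index to iterate downward, and bound the resulting main sum by the Maclaurin series for $e^{1/(e-\varepsilon)}$. The only real divergence is the small-$k$ tail: the paper keeps running the same recursion there, merely replacing $(1+\tfrac1i)^{i-2}$ by the weaker lower bound $1$, so that $a_i\leq a_{n-1}/(n-i-1)!$ and the tail is at most $a_{n-1}\cdot e/r!$ directly; you instead bound each small $a_k$ by the explicit polynomial $n^{k+1}(k+1)^{k-1}/(k+1)!$ and invoke the dominance of $n^{n-2}$. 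Both work, but note one slip in your calibration: enlarging $r(\varepsilon)$ does not help you satisfy $\sum_{k=1}^{r-2}a_k<(e/r!)\,n^{n-2}$ --- it adds terms to the left side and shrinks the budget on the right. The correct quantifier order is: fix $r$ (just large enough that $(1+\tfrac1i)^{i-2}>e-\varepsilon$ for $i\geq r$), and then the tail inequality holds for all $n$ beyond some threshold $N(r,\varepsilon)$, which need not equal $r$; for $n$ only slightly larger than $r$ the inequality genuinely fails. This is harmless for how the lemma is used (only the $n\to\infty$ behavior matters, and the paper's own $r(\varepsilon)=n-k(\varepsilon)+1$ is likewise $n$-dependent), but you should state the conclusion as holding for all sufficiently large $n$ rather than for all $n>r(\varepsilon)$.
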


\begin{proof}
Recall $a_{n-1}=n^{n-2}$ is the number of spanning trees in $K_n$.  As in Lemma~\ref{lemmatop}, we examine ratios of consecutive terms, but this time we need to establish an upper bound for the $a_i$ in terms of $a_{n-1}$.  Now
\begin{equation}
\frac{a_i}{a_{i-1}} = \frac{\binom{n}{i+1} (i+1)^{i-1}}{\binom{n}{i} i^{i-2}} = \frac{n-i}{i+1}\cdot \frac{(i+1)^{i-1}}{i^{i-2}} = (n-i) \left(\frac{i+1}{i}\right)^{i-2}
\label{bottomratio}
\end{equation}
Let $\varepsilon > 0$.  Since $\lim_{i\to\infty} \left(\frac{i+1}{i}\right)^{i-2} = e$, there exists a $k(\varepsilon)$ such that
$$
a_{i-1} \leq \frac{a_i}{(n-i)(e-\varepsilon)}
$$
for every $k(\varepsilon) < i \leq n-1$.
%%
%When $i = n-1$,
%\begin{equation}
%b_{n-2} \leq \frac{1}{e-\varepsilon} \cdot b_{n-1}.
%\label{bottomn-1}
%\end{equation}
%When $i = n-2$,
%\begin{equation}
%b_{n-3} \leq \frac{1}{2(e-\varepsilon)} \cdot b_{n-2} \leq \frac{1}{2(e-\varepsilon)^2} \cdot b_{n-1},
%\end{equation}
%where we have used \eqref{bottomn-1} to substitute in for $b_{n-1}$.  Continuing in this manner, we obtain
%\begin{equation}
%b_{n-4} \leq \frac{1}{6(e-\varepsilon)^3} \cdot b_{n-1}
%\end{equation}
%for $i = n-4$, and, in general,
As in the proof of Lemma~\ref{lemmatop}, an inductive argument can be used to show 
$$
a_i \leq \frac{a_{n-1}}{(n-i-1)!(e-\varepsilon)^{n-i-1}}
$$
for all $i$ such that $k(\varepsilon) < i \leq n-1$.
On the other hand, if $i\leq k(\varepsilon)$, then
$$
a_{i-1}= \frac{a_i}{(n-i)\left(\frac{i+1}{i}\right)^{i-2}}  \leq \frac{a_i}{n-i} \leq \frac{a_{n-1}}{(n-i)!}
$$
where we have bounded $\left(\frac{i+1}{i}\right)^{i-2}$ below by 1 and the final inequality follows by a similar inductive argument.
Therefore, 
\begin{equation*}
A = \sum_{i=1}^{n-1}a_i \leq a_{n-1} (f(n,k(\varepsilon)) + g(n,k(\varepsilon)))
\end{equation*}
where
\begin{equation*}
f(n,k(\varepsilon)) = 1 + \frac{1}{e-\varepsilon} + \frac{1}{2!(e-\varepsilon)^2} + \dots + \frac{1}{(n-k(\varepsilon))!(e-\varepsilon)^{n-k(\varepsilon)}},
\end{equation*}
corresponding to those terms where $i>k(\varepsilon)$, and
\begin{equation*}
g(n,k(\varepsilon)) = \frac{1}{(n-k(\varepsilon)+1)!} + \frac{1}{(n-k(\varepsilon)+2)!} + \dots + \frac{1}{(n-2)!} + \frac{1}{(n-1)!}
\end{equation*}
corresponds to the terms where $i\leq k(\varepsilon)$.
Using the Maclaurin expansion for $e^x$ evaluated at $x = (e-\varepsilon)^{-1}$ gives an upper bound for $f(n,k(\varepsilon))$:
\begin{equation*}
f(n,k(\varepsilon)) = \sum_{i=0}^{n-k(\varepsilon)} \frac{1}{i!(e-\varepsilon)^i} < \sum_{i=0}^{\infty} \frac{1}{i!(e-\varepsilon)^i} = e^{(e-\varepsilon)^{-1}}.
\end{equation*}
For $g(n,k(\varepsilon))$, we have
\begin{equation*}
g(n,k(\varepsilon)) = \sum_{i = n-k(\varepsilon)+2}^{n} \frac{1}{(i-1)!} < \sum_{i=n-k(\varepsilon)+2}^{\infty} \frac{1}{(i-1)!}  <\frac{e}{r(\varepsilon)!},
\end{equation*}
where $r(\varepsilon) = n-k(\varepsilon) + 1$.
Therefore,
\begin{equation*}
A \leq a_{n-1}(f(n,k(\varepsilon)) + g(n,k(\varepsilon))) < a_{n-1}\left(e^{(e-\varepsilon)^{-1}} + \frac{e}{r(\varepsilon)!}\right).
\end{equation*}

\end{proof}

We can now prove part (1) of Theorem~\ref{T:main2}.

\begin{proof} [Proof: Theorem~\ref{T:main2} (1)]
Recall   $b_{n-1} = (n-1)n^{n-2}$ and $a_{n-1} = n^{n-2}$, so
\begin{equation*}
\frac{1}{n-1} \cdot \frac{b_{n-1}}{a_{n-1}} = 1.
\end{equation*}
Therefore, \eqref{topresult} and \eqref{bottomresult} imply
\begin{equation*}
\mu_p(n) = \frac{1}{n-1} \cdot \frac{B}{A} > \frac{1}{n-1} \cdot \frac{b_{n-1}}{a_{n-1}} \cdot \frac{\left( \frac{n-3}{n-1} \right) e^{e^{-1}} }{e^{(e-\varepsilon)^{-1}} + \frac{e}{r(\varepsilon)!}} =\left( \frac{n-3}{n-1}\right) \cdot \frac{e^{e^{-1}} }{e^{(e-\varepsilon)^{-1}} + \frac{e}{r(\varepsilon)!}}.
\end{equation*}
Then
$\displaystyle{\lim_{n\to\infty} \mu_p(n) = 1}$ as $n \to \infty$ since  $\varepsilon$ can be chosen arbitrarily small and $r(\varepsilon)$ can be made arbitrarily large.

\end{proof}

We now prove Theorem~\ref{T:main1}.

\begin{proof}[Proof: Theorem~\ref{T:main1}]
\begin{enumerate}
\item Recall $p_n=\frac{a_{n-1}}{A}$, where $a_{n-1}$ is the number of spanning trees in $K_n$ and $A$ is the total number of subtrees of $K_n$.  Then the argument in Lemma~\ref{lemmabottom} can be modified to prove 
$$A \geq a_{n-1}\sum_{i=0}^{n-1}\frac{1}{i!e^i}.$$
This follows by bounding $\displaystyle{ \left(\frac{i+1}{i}\right)^{i-2}}$ below by $e$ in equation~\eqref{bottomratio} -- this is the same bound we needed in our proof of Lemma~\ref{lemmatop}.  Then, bounding $\displaystyle{ \frac{1}{p_n}=\frac{A}{a_{n-1}} }$, we have

$$e^{e^{-1}}-\varepsilon' \leq \sum_{1=0}^{n-1}\frac{1}{i!e^i} \leq \frac{A} {a_{n-1}}\leq \left(e^{(e-\varepsilon)^{-1}} + \frac{e}{r(\varepsilon)!}\right),$$ where $\varepsilon$ and $\varepsilon'$ can be made as small as we like, and $r(\varepsilon)$ can be made arbitrarily large.

Hence $$p_n=\frac{a_{n-1}}{A} \to \frac{1}{e^{e^{-1}}} \approx 0.6922\dots$$ as $n \to \infty$.

\item Note that $$q_n=\frac{(n-1)a_{n-1}}{B}=\frac{(n-1)A}{B}\cdot \frac{a_{n-1}}{A} = \frac{p_n}{\mu_p(n)}.$$

By Theorem~\ref{T:main2}(1), $\mu_p(n)\to 1$ and, by part (1) of this theorem, $p_n \to e^{-e^{-1}}$.  The result now follows immediately.

\end{enumerate}
\end{proof}

An interesting consequence of our proof of part (2) of Theorem~\ref{T:main1} is that $p=q\mu_p(n)$ for any graph $G$, so $p<q$.  Thus, if $G=C_n$ is a cycle, then the (uniform) density is approximately $\frac12$, so we immediately get the weighted probability that a random subtree spans is approximately twice the probability for the uniform case (although both probabilities approach 0 as $n \to \infty$).  

We state this observation as a corollary.

\begin{cor}\label{C:pnqn}  Let $G$ be any connected graph, let $p(G)$ and $q(G)$ be the uniform and weighted probabilities (resp.) that a random subtree is spanning, and let $\mu(G)$ be the uniform subtree density.  Then $p(G)=q(G)\mu(G).$

\end{cor}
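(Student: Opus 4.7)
The plan is to observe that nothing in the derivation of the formulas in Lemma~\ref{L:global} actually used that the underlying graph is $K_n$: the definitions of $p$, $q$ and $\mu$ make sense for any connected graph $G$, and once the bookkeeping is set up the identity $p(G)=q(G)\mu(G)$ falls out as a one-line algebraic cancellation, exactly as in the proof of Theorem~\ref{T:main1}(2).

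Concretely, I would fix a connected graph $G$ on $n$ vertices and introduce, in parallel to the notation preceding Lemma~\ref{L:global}, the quantities $a_k(G)$ (the number of $k$-edge subtrees of $G$), $b_k(G)=k\,a_k(G)$, $A(G)=\sum_{k=1}^{n-1}a_k(G)$, and $B(G)=\sum_{k=1}^{n-1}b_k(G)$. Here $a_{n-1}(G)$ is the number of spanning trees of $G$ (positive since $G$ is connected), and $b_{n-1}(G)=(n-1)\,a_{n-1}(G)$. The verbatim argument of Lemma~\ref{L:global} then gives
\begin{equation*}
p(G)=\frac{a_{n-1}(G)}{A(G)},\qquad q(G)=\frac{b_{n-1}(G)}{B(G)},\qquad \mu(G)=\frac{B(G)}{(n-1)\,A(G)}.
\end{equation*}

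With these three formulas in hand, the corollary reduces to a direct computation:
\begin{equation*}
q(G)\,\mu(G)=\frac{b_{n-1}(G)}{B(G)}\cdot\frac{B(G)}{(n-1)\,A(G)}=\frac{(n-1)\,a_{n-1}(G)}{(n-1)\,A(G)}=\frac{a_{n-1}(G)}{A(G)}=p(G),
\end{equation*}
which is exactly the claim.

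There is no real obstacle here; the only point that needs mention is that connectedness of $G$ is used solely to guarantee $a_{n-1}(G)\neq 0$ (so that $q(G)$ is well defined as a probability of a nonempty event), after which the identity is purely formal. In particular, the proof makes clear that the relation $p=q\mu$ is a structural consequence of the definitions and does not depend on any asymptotic analysis, explaining the remark that specializing to $G=C_n$ instantly converts the uniform density $\tfrac{1}{2}$ into the factor-of-two gap between $p(C_n)$ and $q(C_n)$.
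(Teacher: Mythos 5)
Your proposal is correct and matches the paper's own argument: the paper derives the identity $q_n = p_n/\mu_p(n)$ by exactly the same cancellation $\frac{(n-1)a_{n-1}}{B}=\frac{(n-1)A}{B}\cdot\frac{a_{n-1}}{A}$ in the proof of Theorem~\ref{T:main1}(2), and the corollary is just the observation that nothing in that computation depends on the graph being $K_n$. Your explicit remark that connectedness is only needed to make $a_{n-1}(G)$ positive is a fair (and slightly more careful) addition.
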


If $\mathcal{G}$ is an infinite family of graphs, then we can interpret Cor.~\ref{C:pnqn} asymptotically.  In this case, the two probabilities $p$ and $q$ coincide (and are non-zero) in the limit if and only if the density approaches 1.

We conclude this section with a very short proof of  part (2) of Theorem~\ref{T:main2}. 

\begin{proof}[Proof: Theorem~\ref{T:main2} (2)]  We have  $\mu_p(n)<\mu_q(n)<1$ for all $n$ by a standard argument in probability (see \cite{fg}). Since $\mu_p(n) \to 1$ as $n \to \infty$, we are done.

\end{proof}

%
%We close this section with a strange application of part (1) of Theorem~\ref{T:main2}.  Let $\overline{u}_n, \overline{v}$ and $ \overline{w}$ new the following vectors in $\mathbb{R}^n$.
%$$\overline{u}_n=\langle1,1, \dots, 1\rangle \hskip.3in \overline{v}_n=\langle0,1,2, \dots, n-1\rangle \hskip.3in 
%\overline{w}_n=\langle a_0,a_1, \dots, a_{n-1}\rangle, $$
%where the $a_i$ are the number of subtrees of $K_n$ using $i$ edges.
\section{Conjectures, extensions and open problems}\label{S:future}
We believe the study of subtrees in arbitrary graphs is a fertile area for interesting research questions.  We outline several ideas that should be worthy of future study.  Many of these topics are addressed in \cite{cgmv}.

\begin{enumerate}
\item {\bf Local statistics.}  Compute ``local'' versions of the four statistics given here.  Fix an edge $e$ in $K_n$.  Then it is straightforward to compute $p_n', q_n', \mu_{p'}(n)$ and $\mu_{q'}(n)$, where each of these is described below. 
\begin{itemize}
\item $p_n'$ is the (uniform) probability  that a random subtree {\it containing the edge $e$} is a spanning tree.  This is given by
$$p_n'=\frac{n^{n-3}}{\sum_{k=0}^{n-2}{n-2 \choose k}(k+2)^{k-1}}.$$
The number of spanning trees containing a given edge is $2n^{n-3}$ -- this is easy to show using the tree-edge incidence matrix.  Incidence counts can also show the total number of subtrees containing the edge $e$ is given by $\displaystyle{\sum_{k=0}^{n-2}2{n-2 \choose k}(k+2)^{k-1}}$.  This can also be derived by using the {\it hyperbinomial transform} of the sequence of 1's.  

\item $q_n'$ is the  (weighted) probability that a random subtree {\it containing the edge $e$} is a spanning tree.  This time, we get
$$q_n'=\frac{(n-1)n^{n-3}}{\sum_{k=0}^{n-2}{n-2 \choose k}(k+2)^{k-1}}.$$

\item $ \mu_{p'}(n)$ is the local (uniform) density, so 
$$\mu_{p'}(n)=\frac{\sum_{k=0}^{n-2}(k+1){n-2 \choose k}(k+2)^{k-1}}{(n-1)\sum_{k=0}^{n-2}{n-2 \choose k}(k+2)^{k-1}}.$$
\item $ \mu_{q'}(n)$ is the local (weighted) density, which gives
$$\mu_{q'}(n)=\frac{\sum_{k=0}^{n-2}(k+1)^2{n-2 \choose k}(k+2)^{k-1}}{(n-1)\sum_{k=0}^{n-2}(k+1){n-2 \choose k}(k+2)^{k-1}}.$$
\end{itemize}
 It is not difficult to prove the same limits that hold for the global versions of these statistics also hold for the local versions: $p'_n$ and $q'_n$ both approach $e^{-e^{-1}}$ and $ \mu_{p'}(n)$ and $\mu_{q'}(n)$ both approach 1 as $n \to \infty$.  (All of these can be proven by arguments analogous to the global versions.)
 
 For $p'_n$, however, the connection to the global statistics is even stronger:  $p_n'=q_n$ for all $n$, i.e., the global weighted probability exactly matches the local uniform probability of selecting a spanning tree.    (This can be proven by a direct calculation, but it is immediate from the ``average class size'' formulation of the weighted probability.)

Other statistics that might be of interest here include larger local versions of these four:  suppose we consider only subtrees that contain a given pair of adjacent edges, or a given subtree with 3 edges.  Will the limiting probabilities match the ones given here?

\item {\bf Non-spanning subtrees.}  Explore the probabilities that a random subtree of $K_n$ has exactly $k$ edges for specified $k<n-1$.  The analysis given here can be used to show the uniform and weighted probabilities of choosing a subtree with $n-2$ edges (one less than a spanning tree) approaches $\displaystyle{\left(e^{-1-e^{-1}}\right) =0.254646\dots.}$  This follows by observing the ratio $a_{n-1}/a_{n-2} \to e$ as $n \to \infty$.  (It is interesting that both sequences are decreasing here, in contrast to the sequences $p_n$ and $q_n$.)

\item {\bf Other graphs.} Explore these statistics for other classes of graphs.  For instance, when $G=K_{n,n}$ is a complete bipartite graph, we get limiting values for the probability (uniform or weighted) that a random subtree spans is
$$e^{-2e^{-1}} = 0.478965\dots,$$
 the square of the limiting value we obtained for the complete graph.  Both the uniform and weighted densities approach 1 in this case (forced by Cor.~\ref{C:pnqn}).

On the other hand, consider the theta graph $\theta_{n,n}$, formed adding an edge ``in the middle'' of a $2n-2$ cycle (see Fig.~\ref{F:theta}).  Then the uniform density approaches $\frac23$ as $n \to \infty$, and both the uniform and weighted probabilities that a random subtree spans tend to 0 as $n$ tends to $\infty$  \cite{cgmv}.

    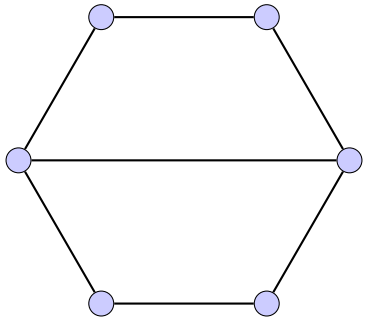
\begin{figure}[h] 
    \centering
        \begin{tikzpicture}[scale=2.2, auto,swap]
            % Draw the vertices
            \foreach \pos/\name in {{(-0.5,0.866)/a}, {(0.5,0.866)/b}, {(1,0)/c}, {(0.5,-0.866)/d}, {(-0.5,-0.866)/e}, {(-1,0)/f}}
                \node[vertex] (\name) at \pos {};
                
            % Labels
%            \node[labels] (labela) at (0,0.4) {$a=4$};
%            \node[labels] (labelb) at (0,-0.4) {$b=4$};
 
            % Connect vertices with edges
            \foreach \source/ \dest in {a/b, b/c, c/d, d/e, e/f, f/a, f/c}
                \path[edge] (\source) edge (\dest);
            
        \end{tikzpicture}
        \caption{$\theta_{4,4}$.}
        \label{F:theta}
    \end{figure}

We conjecture that, for any given graph, the number of edges determines whether or not these limiting densities are 0.

\begin{conj}
If $|E|=O(n^2)$, then the probability (either uniform or weighted)  of selecting a spanning tree is non-zero (in the limit).
\end{conj}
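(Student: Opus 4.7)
First, I interpret the hypothesis as $|E(G_n)| \geq \alpha n^2$ for some fixed $\alpha > 0$, since the bound $|E| = O(n^2)$ is vacuous for simple graphs. By Cor.~\ref{C:pnqn} and the fact that $\mu_p(n) \leq 1$, one has $q_n = p_n / \mu_p(n) \geq p_n$, so it suffices to bound $p_n = a_{n-1}/A$ away from zero, i.e., to show $A/a_{n-1} \leq C(\alpha)$ for a constant depending only on $\alpha$.

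The plan is to adapt the ratio-counting scheme of Lemmas~\ref{lemmatop} and~\ref{lemmabottom}. The analogue of the clean ratio $a_i/a_{i-1}$ for $K_n$ is the following leaf-extension identity, valid for any graph $G$: each $(k+1)$-edge subtree $T$ decomposes into a $k$-edge subtree plus an added leaf-edge in exactly $L(T)$ ways, where $L(T)$ is the number of leaves of $T$. This gives
\[
\sum_{|E(T)|=k+1} L(T) \;=\; \sum_{|E(T')|=k} e_G\bigl(V(T'),\,V(G)\setminus V(T')\bigr).
\]
Using the crude bound $L(T) \leq k+1$ on the left yields $(k+1)\,a_{k+1} \geq \sum_{T'} e_G(V(T'),V(G)\setminus V(T'))$. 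In $K_n$, every size-$k$ subtree has cut size exactly $(k+1)(n-k-1)$, which is the source of the clean ratio in Lemma~\ref{lemmatop}. For dense $G$, the key lemma to establish is that, on average over size-$k$ subtrees, the cut size is at least $c\alpha (k+1)(n-k-1)$ for some universal constant $c > 0$. This would give $a_{k+1}/a_k \geq c\alpha(n-k-1)$; iterating downward from $k = n-2$ yields $a_k/a_{n-1} \leq 1/\bigl((c\alpha)^{n-1-k}(n-1-k)!\bigr)$, and summing produces $A/a_{n-1} \leq e^{1/(c\alpha)}$, as required.

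The principal obstacle is establishing this cut-size lower bound for the specific (non-uniform) distribution on $(k+1)$-vertex subsets induced by $k$-edge subtrees. A uniformly random $(k+1)$-set has expected cut size $\alpha(k+1)(n-k-1)$ by linearity, but the subtree-induced distribution is biased toward vertex sets spanning edge-dense subgraphs, which tend to have smaller outward cuts. For instance, in a barbell graph (two copies of $K_{n/2}$ joined by a bridge) subtrees concentrate in the two lobes; however the cut within each lobe is still $\Theta((k+1)(n-k-1))$, so the lemma holds there with a smaller constant. Making the averaging rigorous in full generality likely requires either a decomposition of $G$ via Szemer\'edi-type regularity into pseudorandom pieces handled individually by a $K_n$-style calculation, or a weighted double-count that quantitatively tracks the bias. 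Verifying the conjecture first on families such as complete multipartite graphs and random graphs $G(n,p)$ with $p$ fixed would both corroborate it and guide the choice of averaging argument.
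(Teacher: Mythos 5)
This statement is one of the paper's open conjectures; the paper offers no proof of it, so there is nothing to compare your argument against. Judged on its own terms, your proposal is candid about where it is incomplete, and unfortunately the obstacle you flag is not a technicality: the key averaging lemma you need --- that the mean outward cut of the vertex sets occupied by $k$-edge subtrees is $\Omega\bigl(\alpha(k+1)(n-k-1)\bigr)$ --- is false for general graphs with $\Theta(n^2)$ edges, and in fact the conjecture itself fails under your (only sensible) reading of the hypothesis as $|E|\geq \alpha n^2$.

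Concretely, let $G_n$ consist of a clique on $m=\lceil n/2\rceil$ vertices together with a pendant path on the remaining $n-m$ vertices attached at a single clique vertex $v$. Then $|E(G_n)|=\binom{m}{2}+(n-m)=\Theta(n^2)$. Every path edge is a bridge, so $a_{n-1}(G_n)=m^{m-2}$. But for each $j=0,1,\dots,n-m$ and each spanning tree $S$ of the clique, the union of $S$ with the first $j$ path edges is a distinct subtree of $G_n$, so $A\geq (n-m+1)\,m^{m-2}$ and $p(G_n)\leq 1/(n-m+1)\to 0$; since each of these subtrees has at least $m-1$ edges, the same count gives $B\geq (m-1)(n-m+1)m^{m-2}$ and hence $q(G_n)=(n-1)a_{n-1}/B=O(1/n)\to 0$ as well. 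In your framework, the subtrees consisting of a spanning tree of the clique plus an initial path segment have outward cut size exactly $1$, and they dominate $a_k$ for $k$ near $n-1$, so the subtree-induced average cut is $O(1)$ rather than $\Omega(kn)$ --- exactly the bias toward small cuts you worried about, realized in the worst way. The moral is that total edge count cannot control these probabilities; your leaf-extension double count (which is sound, and in $K_n$ recovers a clean form of the bound in Lemma~\ref{lemmabottom}) would instead need a hypothesis guaranteeing large cuts everywhere, e.g.\ minimum degree at least $\alpha n$, under which each of the $n-k-1$ outside vertices sends at least $\alpha n-(n-k-1)$ edges into the tree and your iteration goes through for $k$ in the critical range near $n-1$. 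I would recommend reformulating the conjecture with such a degree condition before investing further in the averaging argument.
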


\begin{conj}
If $|E|=O(n)$, then the probability (either uniform or weighted) of selecting a spanning tree is zero (in the limit).
\end{conj}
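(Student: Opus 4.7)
The plan is to prove that for any sequence of connected graphs $\{G_n\}$ with $|V(G_n)|=n$ and $|E(G_n)|\le Cn$ for some fixed constant $C$, both $p(G_n)\to 0$ and $q(G_n)\to 0$. Writing $p(G_n)=\tau(G_n)/A(G_n)$ and $q(G_n)=(n-1)\tau(G_n)/B(G_n)$, where $\tau(G_n)=a_{n-1}(G_n)$ is the spanning-tree count, the strategy is to upper-bound $\tau(G_n)$ and lower-bound $A(G_n)$ and $B(G_n)$ by an asymptotically larger quantity.

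For the upper bound I would invoke Grimmett's inequality $\tau(G)\le \frac{1}{n}\bigl(\frac{2m}{n-1}\bigr)^{n-1}$, which under the hypothesis $m\le Cn$ yields $\tau(G_n)\le \frac{(2C+o(1))^{n-1}}{n}$, an explicit exponential bound with base $2C$.

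For the lower bound on $A(G_n)$ I would split into two regimes. In the polynomial regime, where $\tau(G_n)$ grows only polynomially in $n$ (this covers families like cycles $C_n$, the theta graphs $\theta_{n,n}$ of Fig.~\ref{F:theta}, trees, and bounded-treewidth families), the trivial bound $A(G_n)\ge \binom{n}{2}$, obtained by counting subpaths of any spanning tree of $G_n$ via Jamison's estimate for trees, is already asymptotically larger than $\tau(G_n)$. In the exponential regime, where $\tau(G_n)=(\rho+o(1))^n$ for some $\rho>1$, I would produce an exponential lower bound via local branching: since $G_n$ is connected with bounded average degree, a positive density of vertices have degree at least $2$, and a greedy growth procedure anchored at each such vertex $v$ shows that the number of rooted subtrees of size $k$ containing $v$ is at least $r^k/\mathrm{poly}(k)$ for an intrinsic branching rate $r=r(G_n)>1$. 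Summing over roots and dividing by the $(k+1)$-fold root-overcounting gives $A(G_n)\ge n\cdot r^{\Omega(n)}$; applying the same enumeration weighted by size lower-bounds $B(G_n)$ and handles $q(G_n)$ simultaneously.

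The main obstacle is quantitative: to finish the proof one must show that the subtree branching rate $r(G_n)$ always strictly exceeds the Grimmett base $2C$, or equivalently that the ``subtree growth rate'' strictly dominates the ``spanning-tree growth rate'' for every sparse family. I expect the cleanest route is via Benjamini--Schramm local weak convergence: pass to a subsequential local weak limit $(\mathcal{G},\rho)$ of $\{G_n\}$, interpret $\lim_n n^{-1}\log A(G_n)$ and $\lim_n n^{-1}\log \tau(G_n)$ as generating-function invariants of $(\mathcal{G},\rho)$, and verify the strict inequality on the limit by exploiting the fact that subtrees allow arbitrary local stopping while spanning trees enforce a rigid global connectivity constraint. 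Degenerate local limits (bi-infinite paths and comparable objects) fall into the polynomial regime and are handled directly.
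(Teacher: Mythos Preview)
This statement is labelled a \emph{Conjecture} in the paper and appears in the section ``Conjectures, extensions and open problems''; the paper offers no proof whatsoever. So there is nothing to compare your proposal against, and any valid argument would be new mathematics rather than a recovery of the authors' reasoning.

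As it stands, your proposal is a strategy sketch, not a proof, and it has genuine gaps. First, the ``polynomial regime'' step is incorrect as stated: the bound $A(G_n)\ge \binom{n}{2}$ is only quadratic, so it does \emph{not} dominate an arbitrary polynomially-growing $\tau(G_n)$; you would need $\tau(G_n)=o(n^2)$, which you have not established for the families you list (and is false for bounded-treewidth families in general, where $\tau$ can grow exponentially). Second, the dichotomy ``$\tau$ grows polynomially'' versus ``$\tau=(\rho+o(1))^n$ for some fixed $\rho>1$'' is artificial: intermediate growth rates such as $e^{\sqrt{n}}$ are not excluded, and nothing in the hypothesis $|E|=O(n)$ forces $\tau$ into either bucket.

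Third, and most seriously, the heart of the argument---that the subtree growth rate $r(G_n)$ strictly exceeds the Grimmett base $2C$---is not proved but merely hoped for. Your greedy branching claim ``rooted subtrees of size $k$ at $v$ number at least $r^k/\mathrm{poly}(k)$ with $r>1$'' already fails when $G_n$ is a path (there $r=1$), so the assertion that positive-density degree-$\ge 2$ vertices suffice is wrong without further structural input. The Benjamini--Schramm route you outline is plausible as a research programme, but you have not defined the limiting invariants, shown they exist along subsequences for arbitrary sparse families, or proved the strict inequality on the limit; these are exactly the hard parts. In short, you have restated the conjecture as a growth-rate comparison and identified a possible toolkit, but the proposal does not close any of the essential gaps.
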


\item {\bf Optimal graphs.}  Determine the ``best'' graph on $n$ vertices and $m$ edges.  There are many possible interpretations for ``best'' here:  for instance, we might investigate which graph maximizes $p(G)$ and which maximizes the density?  Must the graph that maximizes one of these statistics maximize all of them?  This is closely related to the work in \cite{vw}, where extremal classes of trees are determined for  uniform density.

\item {\bf Subtree polynomial.}  We can define a polynomial to keep track of the number of subtrees of size $k$.  If $G$ is any graph with $n$ vertices, let $a_k$ be the number of subtrees of size $k$.  Then define a {\it subtree polynomial} by 
$$s_G(x) = \sum_{k=0}^{n-1} a_k x^k.$$

We can compute the subtree densities  directly from this polynomial and its derivatives:

$$\mu_p(G) = \frac{s_G'(1)}{(n-1)s_G(1)}  \hskip.2in \mbox{and} \hskip.2in \mu_q(G) = \frac{s_G'(1)+s''_G(1)}{(n-1)s'_G(1)}$$

It would be worthwhile to study the roots and various properties of this polynomial.  In particular, we conjecture that the coefficients of the polynomial are unimodal.

\begin{conj}
The coefficients of $s_G(x)$ are unimodal.
\end{conj}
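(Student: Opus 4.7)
The plan is to attack the conjecture via the stronger property of log-concavity, $a_k^2 \ge a_{k-1}a_{k+1}$, which implies unimodality and is local enough to be amenable to combinatorial injections. First I would verify the conjecture directly for the concrete families where $a_k$ has a closed form: for $G = K_n$, the formula $a_k = \binom{n}{k+1}(k+1)^{k-1}$ together with the ratio computation in equation \eqref{bottomratio} reduces log-concavity to a one-variable inequality in $n$ and $k$ that can be verified by elementary calculus; for $G = K_{n,n}$ and for trees (where Jamison's enumeration applies) the same style of direct estimate should work. These base cases are important both as sanity checks and as candidates for the eventual tight examples.

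For a general connected graph $G$, my main strategy is to build a partial injection $\phi_k : \mathcal{T}_k \hookrightarrow \mathcal{T}_{k+1}$ whenever $k$ is below a presumed mode. Fix a total ordering on $E(G)$. For each subtree $T$ of size $k < n-1$, the connectivity of $G$ guarantees the existence of an edge with exactly one endpoint in $V(T)$; let $\phi_k(T) = T + e(T)$ where $e(T)$ is the minimum such edge. To force injectivity, I would decorate each subtree with a canonical "root" vertex (say the lowest-labeled endpoint of its lowest-labeled edge) and recover $T$ from $\phi_k(T)$ by identifying the edge $e(T)$ as the one whose removal leaves the canonical root-component with the correct distinguishing property. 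For the decreasing side of the mode, a dual leaf-removal injection $\psi_k : \mathcal{T}_k \hookrightarrow \mathcal{T}_{k-1}$ would proceed by stripping a canonically-chosen leaf edge. Combining both halves gives unimodality provided the two regimes meet.

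As a secondary line of attack, I would set up a deletion-contraction recursion for $s_G(x)$, separating subtrees that use a fixed edge $e$ from those that do not, and try to prove by induction that log-concavity is preserved. This in turn reduces to an interlacing statement for the two resulting polynomials, which one might hope to establish in the spirit of Heilmann--Lieb or Chudnovsky--Seymour real-stable results.

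The hard part will be injectivity of $\phi_k$ in Step 2. The naive greedy extension is almost never injective: many different $k$-subtrees extend to the same $(k+1)$-subtree, and the canonical-root trick fixes this only when the structure of $G$ is sufficiently rigid (as in trees, $K_n$, or $K_{n,n}$). For graphs with many short cycles and few symmetries, distinguishing which edge was added appears to require essentially solving the unimodality question itself, and I expect the full conjecture to hinge on a deeper polynomial-positivity or Lorentzian/matroidal result that goes beyond what the elementary techniques of this paper can reach. A realistic milestone would therefore be to settle the conjecture for all graphs with $|E(G)| \le n$ (trees plus unicyclic graphs) using an explicit injection, and then for $K_n$, $K_{n,n}$, and the theta graphs $\theta_{n,n}$ of Figure~\ref{F:theta} by direct computation, leaving the general case open.
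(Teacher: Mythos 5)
This statement is one of the paper's open conjectures: the authors offer no proof, only the observation that unimodality would follow if $s_G(x)$ had all real negative roots, together with the caveat that $s_G(x)$ is \emph{not} stable in general. So there is no proof in the paper to compare against, and the real question is whether your proposal closes the gap. It does not, and you essentially say so yourself. The central step --- injectivity of the greedy extension $\phi_k : \mathcal{T}_k \hookrightarrow \mathcal{T}_{k+1}$ --- is exactly where the difficulty lives. A $(k+1)$-edge subtree $T'$ can arise as $T + e(T)$ from many different $k$-edge subtrees $T$ (one for each leaf edge of $T'$ that happens to be the minimum boundary edge of the corresponding $T$), and the ``canonical root'' decoration does not resolve this: to recover $T$ from $\phi_k(T)$ you must identify which edge was added, and nothing in the construction marks it. Decorating the domain changes the counting problem (you would then be comparing decorated objects, not $a_k$ to $a_{k+1}$). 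There is also a circularity: you propose to run $\phi_k$ ``below the mode'' and $\psi_k$ above it, but locating the mode is equivalent to the unimodality statement you are trying to prove. Finally, your secondary route via real-rootedness/interlacing is explicitly obstructed by the paper's own remark that $s_G(x)$ is not stable in general, so any deletion--contraction induction would have to preserve something weaker (log-concavity or the Lorentzian property), and you give no mechanism for that; the deletion--contraction decomposition of subtrees containing $e$ versus avoiding $e$ does not even split $s_G$ into subtree polynomials of minors of $G$, since subtrees of $G/e$ containing the contracted vertex correspond to subtrees of $G$ in a size-shifting, non-bijective way.

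What survives is the modest program at the end: verifying log-concavity of $a_k = \binom{n}{k+1}(k+1)^{k-1}$ for $K_n$ directly from the ratio \eqref{bottomratio} is a finite calculus exercise and would be a genuine (small) contribution, as would the tree and unicyclic cases. But none of that is carried out here, and the general conjecture remains untouched. In short: this is a reasonable research plan with an honestly flagged fatal gap at its core, not a proof, and the conjecture stays open.
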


As an example of an interesting infinite 2-parameter family of graphs, the coefficients of the $\theta$-graph $s_{\theta_{a,b}}(x)$ are unimodal (see Fig.~\ref{F:thetacoef}).
\begin{figure}[h] 
	\centering
	\includegraphics[width=.5\textwidth]{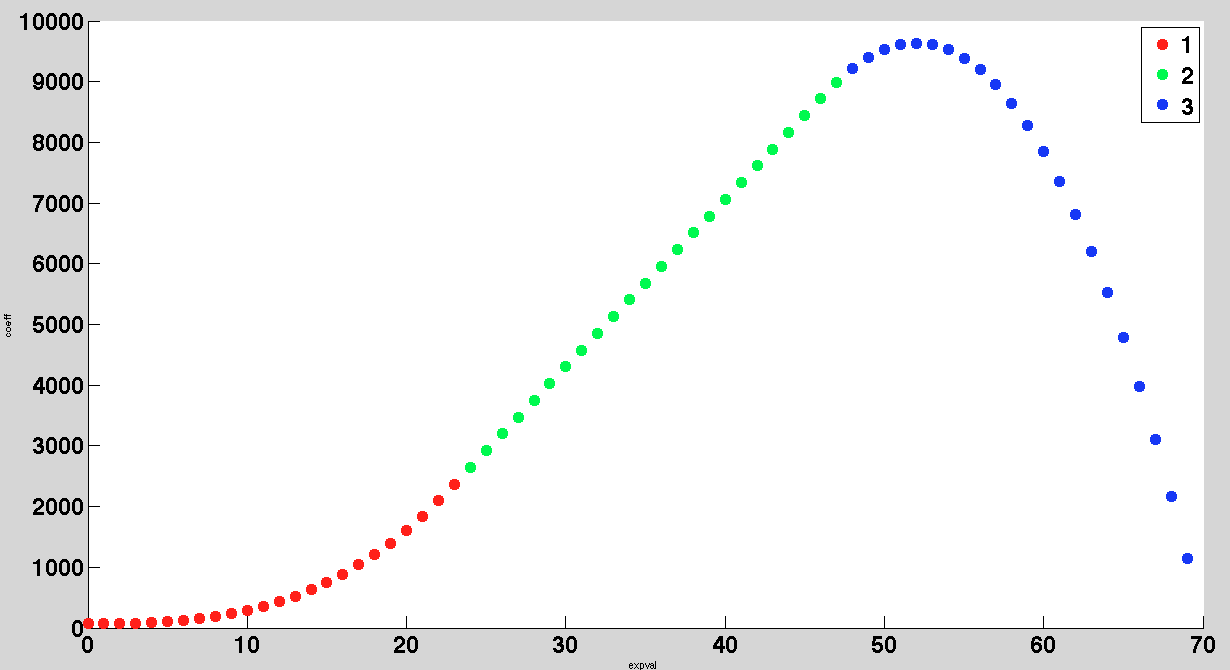}
	\caption{Coefficients of $\theta_{24,48}$, with  three coefficient regions highlighted}
	\label{F:thetacoef}
\end{figure}

The mode of the sequence of coefficients gives another measure of the subtree density.  For the $\theta$-graph $\theta_{n,n}$, we can show the mode is approximately $\sqrt{2}n$ (in \cite{cgmv}).  

Stating the unmodality conjecture in terms of a polynomial has the advantage of potentially using the very well developed  theory of polynomials.  Surveys that address unmodality of coefficients in polynomials include Stanley's classic paper \cite{st} and a recent paper of Pemantle \cite{pe}.  In particular, if all the roots of $s_G(x)$ are negative reals, then the coefficients are unimodal.  (Such polynomials are called {\it stable}.)  Although $s_G(x)$ is not stable in general, perhaps it is for some large class of graphs.

It is not difficult to find different graphs (in fact, different trees)  with the same subtree polynomial.

\begin{conj}
For any $n$, construct $n$-pairwise non-isomorphic graphs that all share the same subtree polynomial.
\end{conj}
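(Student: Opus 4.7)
The plan is to build arbitrarily large families of subtree-equivalent graphs from a single pair of subtree-equivalent trees via disjoint union. The key observation is that the subtree polynomial is additive over connected components: since every subtree is a connected subgraph, any subtree of a disjoint union $G_1 \sqcup G_2$ lies entirely in one component, so $s_{G_1 \sqcup G_2}(x) = s_{G_1}(x) + s_{G_2}(x)$ (with the convention that the constant term counts isolated vertices as size-$0$ subtrees).

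Assume, as the remark preceding the conjecture asserts, that there exist non-isomorphic trees $T_1$ and $T_2$ on the same number of vertices with $s_{T_1}(x) = s_{T_2}(x)$. For each $n \geq 1$ and each $i \in \{0, 1, \ldots, n-1\}$, define $G_i$ to be the disjoint union of $i$ copies of $T_1$ and $n-1-i$ copies of $T_2$. By the additivity observation,
$$ s_{G_i}(x) = i \cdot s_{T_1}(x) + (n-1-i) \cdot s_{T_2}(x) = (n-1)\, s_{T_1}(x), $$
independent of $i$. Moreover, any isomorphism between graphs induces a bijection on the multisets of connected components; since $T_1 \not\cong T_2$, distinct values of $i$ yield distinct multisets, so the graphs $G_0, G_1, \ldots, G_{n-1}$ are pairwise non-isomorphic. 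This produces the required $n$ graphs, all on the same $(n-1)|V(T_1)|$ vertices with a common subtree polynomial.

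The only real obstacle is producing the initial pair $T_1, T_2$. One practical route is a brute-force search: for each small vertex count $v$, enumerate non-isomorphic trees and compute the subtree count sequence $(a_0, a_1, \ldots, a_{v-1})$ via a linear-time dynamic program on rooted subtrees, looking for a collision. A more structural route would be to identify a local modification (for instance, exchanging pendants between two internal vertices with matching ``rooted subtree profiles'') that provably preserves the full subtree polynomial; such a construction would simultaneously give an existence proof and an explicit infinite family of subtree-equivalent trees, strengthening the conclusion.

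If a family of \emph{connected} graphs is desired, the disjoint-union approach must be modified, since joining components typically introduces new subtrees that cross component boundaries and spoil additivity. A natural remedy is a Schwenk-style attachment: fix a host tree $H$ and replace specified pendants of $H$ by either $T_1$ or $T_2$. Whether this preserves $s(x)$ requires a more delicate argument tracking the generating function of subtrees at each attachment point; this is where the main work would go in a connected version of the construction.
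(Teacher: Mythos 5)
The paper offers no proof of this statement at all: it is posed as a construction problem and justified only by the pigeonhole heuristic that there are far more graphs than candidate polynomials. Your disjoint-union argument is sound and, modulo one ingredient, actually settles the literal statement. The additivity $s_{G_1 \sqcup G_2}(x) = s_{G_1}(x) + s_{G_2}(x)$ is correct because subtrees are connected, and it holds in the constant term under the paper's convention $a_0 = |V(G)|$ (or $a_0=0$); the polynomial $(n-1)s_{T_1}(x)$ is visibly independent of $i$; and the multiset-of-components argument gives pairwise non-isomorphism. Two caveats are worth stating explicitly. First, your proof is conditional on the existence of a single pair of non-isomorphic trees with equal subtree polynomials; the paper asserts this is ``not difficult'' but exhibits no example, and your proposal likewise defers it to a search, so the base pair remains the one unproved ingredient in both accounts. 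Second, every other statistic in the paper is defined for connected graphs, so the intended conjecture is plausibly about connected families (or about graphs on a common vertex set of size $n$, given the paper's overloaded use of $n$); for connected graphs the disjoint-union trick is unavailable, and the Schwenk-style pendant-transplant you sketch is where the genuine difficulty of the problem lives. As a resolution of the statement as written, however, your construction is correct and goes strictly beyond what the paper provides.
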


This is expected by pigeonhole considerations -- there should be far more graphs on $n$ vertices than potential polynomials.  

\item {\bf Density monotonicity.}  Does adding an edge always increase the density?   This is certainly false for disconnected graphs -- simply add an edge to a small component; this will lower the overall density.  But we conjecture this cannot happen if $G$ is connected:

\begin{conj}
Suppose $G$ is a connected graph, and $G+e$ is obtained from $G$ by adding an edge between two distinct vertices of $G$.  Then $\mu(G)<\mu(G+e)$.
\end{conj}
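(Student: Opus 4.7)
Applying the formula $\mu(G)=B(G)/\bigl((n-1)A(G)\bigr)$ from Lemma~\ref{L:global}(3), which holds verbatim for any connected graph $G$ on $n$ vertices, and partitioning the subtrees of $G+e$ into those that do and do not contain $e$, the conjecture $\mu(G)<\mu(G+e)$ is equivalent to
\[
\frac{b_e^*}{a_e^*} \;>\; \frac{B(G)}{A(G)},
\]
where $a_e^*$ and $b_e^*$ count subtrees of $G+e$ containing $e$ and the sum of their edge-counts, respectively. In words, the average size of a subtree of $G+e$ that contains $e$ must exceed the average size of an arbitrary subtree of $G$.

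\textbf{Reformulation as positive correlation.} If $T$ is sampled uniformly from subtrees of $G+e$, the inequality above is precisely $\mathbb{E}[\,|T|\mid T\ni e\,] > \mathbb{E}[\,|T|\mid T\not\ni e\,]$, equivalently $\operatorname{Cov}(|T|,\mathbf{1}_{e\in T})>0$. Intuitively this is very plausible: forcing a subtree to contain $e=uv$ pins both endpoints of $e$ and tends to enlarge the subtree.

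\textbf{Main approach: stochastic dominance.} I would try to prove the stronger statement that $\Pr[\,e\in T \mid |T|=k\,]$ is non-decreasing in $k$, which by elementary conditioning implies the positive-correlation reformulation. Writing $M_k$ and $N_k$ for the numbers of size-$k$ subtrees of $G+e$ that do, respectively do not, contain $e$, the monotonicity amounts to the log-concavity-flavored inequalities $M_k N_{k+1}\le M_{k+1} N_k$ for every $k$. The natural attack is an exchange/injection argument: given a pair $(T_1,T_2)$ with $|T_1|=k$, $e\in T_1$, $|T_2|=k+1$, $e\notin T_2$, transfer one edge of $T_2$ into $T_1$ along a judiciously chosen path to produce a pair $(T_1',T_2')$ with the sizes and $e$-inclusion statuses interchanged, and then verify injectivity of the swap.

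\textbf{The main obstacle.} The exchange step is delicate: a na\"ive transfer can create a cycle in $T_1'$ (if the transferred edge joins two vertices already in $T_1$) or disconnect $T_2'$ (if the transferred edge is not a leaf of $T_2$), and injectivity is not obvious when several transfer candidates exist. An alternative route is to write $s_{G+e}(x)-s_G(x)=x\cdot R_{u,v}(x)$, where $R_{u,v}(x)$ is the generating polynomial for ordered pairs of vertex-disjoint subtrees of $G$ rooted at $u$ and $v$; the conjecture then becomes an analytic comparison of $R'_{u,v}(1)/R_{u,v}(1)+1$ with $B(G)/A(G)$. I expect the central difficulty in either approach to be the vertex-disjointness constraint implicit in pairing up subtrees through $e$, which ties the local structure near $e$ to the global subtree statistics and blocks any clean deletion-contraction recursion.
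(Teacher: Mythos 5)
This statement is one of the paper's open problems (the ``Density monotonicity'' item in Section~\ref{S:future}); the authors offer no proof, so there is no argument of theirs to compare yours against. Judged on its own, your submission is a research plan rather than a proof. The reduction is sound: writing $A(G+e)=A(G)+a_e^*$ and $B(G+e)=B(G)+b_e^*$, the mediant inequality shows $\mu(G+e)>\mu(G)$ is equivalent to $b_e^*/a_e^*>B(G)/A(G)$, i.e.\ to $\operatorname{Cov}\bigl(|T|,\mathbf{1}_{e\in T}\bigr)>0$ for $T$ uniform on subtrees of $G+e$, and the monotonicity $M_kN_{k+1}\le M_{k+1}N_k$ would indeed imply this. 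But everything after the reduction is left undone: the exchange/injection argument is never constructed, and you yourself enumerate the reasons it breaks (cycle creation, disconnection, failure of injectivity). The entire combinatorial content of the conjecture lives in exactly the step you have not supplied, so this cannot be counted as a proof of anything beyond the (correct, but easy) equivalent reformulations.

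More seriously, no completion of this plan can exist, because the conjecture is false. Cameron and Mol (``On the mean subtree order of graphs under edge addition,'' \emph{J.\ Graph Theory} \textbf{96} (2021), 403--413) constructed connected graphs $G$ and edges $e=uv$ for which $\mu(G+e)<\mu(G)$: roughly, when $u$ and $v$ lie in a sparse region far from the part of $G$ that carries most of the subtree mass, the subtrees forced to contain $e$ have \emph{below}-average size. In such examples $\operatorname{Cov}\bigl(|T|,\mathbf{1}_{e\in T}\bigr)<0$, so your correlation inequality fails, and a fortiori so does the stronger claim that $\Pr[e\in T\mid |T|=k]$ is nondecreasing in $k$. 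This is worth internalizing: the intuition that ``pinning both endpoints of $e$ tends to enlarge the subtree'' is exactly the point at which the conjecture goes wrong, since pinning the endpoints also \emph{relocates} the subtree, possibly into a region where subtrees are small. Any salvageable version of your approach must therefore restrict the class of graphs or the placement of $e$ (e.g.\ dense or vertex-transitive graphs), and your reduction to $b_e^*/a_e^*>B(G)/A(G)$ remains the right starting point for such a restricted statement.
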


One consequence of this conjecture would be that, starting with a tree, we can add edges one at a time to create a complete graph, increasing the density at each stage.  This could be a useful tool in studying optimal families.

\item {\bf Matroid generalizations.} Instead of using subtrees of $G$, we could use {\it subforests}.  This has the advantage of being well behaved under deletion and contraction.  In particular, the total number of subforests of a graph $G$ is an evaluation of its Tutte polynomial:  $T_G(2,1)$.  The number of spanning trees is also an evaluation of the Tutte polynomial:  $T_G(1,1)$  (In fact, this is how Tutte defined his {\it dichromatic} polynomial originally.)

All of the statistics studied here would then have direct analogues to the subtree problem.  This entire approach would then generalize to matroids.  In this context, subforests correspond to independent sets in the matroid, and spanning trees correspond to bases.  It would be of interest to study basis probabilities and densities for the class of binary matroids, for example.

\end{enumerate}

%The sum of the number of edges in all of the subtrees of $K_n$ is
%$$\sum _{k=0}^{n-1} k (k+1)^{k-1} \binom{n}{k+1}=(n-1) n \sum _{k=0}^{n-2} (k+2)^{k-1} \binom{n-2}{k}$$
%The LHS comes from counting the contributions of trees of size $k$, and the RHS is from the edge-tree incidence matrix.

\end{document}